\newcommand{\bcd}{\begin{center}\begin{tikzcd}}
\newcommand{\ecd}{\end{tikzcd}\end{center}}
\newcommand{\ev}{\mathrm{ev}}
\newcommand{\pt}{\mathrm{pt}}
\newcommand{\PP}{\mathbb{P}}
\newcommand{\ZZ}{\mathbb{Z}}
\newcommand{\RR}{\mathbb{R}}
\newcommand{\CC}{\mathbb{C}}
\newcommand{\QQ}{\mathbb{Q}}
\newcommand{\TT}{\mathbb{T}}
\newcommand{\AAA}{\mathscr{A}}
\newcommand{\CCC}{\mathscr{C}}
\renewcommand{\P}{\mathcal{P}}
\renewcommand{\L}{\mathcal{L}}
\newcommand{\M}{\mathcal{M}}
\newcommand{\gen}[1]{\langle #1 \rangle}
\newtheorem{theo}{Theorem}[section]
\newtheorem*{theom}{Theorem}
\newtheorem{prop}[theo]{Proposition}
\newtheorem{lem}[theo]{Lemma}
\theoremstyle{definition}
\newtheorem{defi}[theo]{Definition}
\theoremstyle{remark}
\newtheorem{remark}[theo]{Remark}
\newenvironment{rem}[1]{
    \begin{remark}#1}{
    \xqed{\blacklozenge}\end{remark}
}
\theoremstyle{remark}
\newtheorem{example}[theo]{Example}
\newenvironment{expl}[1]{
    \begin{example}#1}{
    \xqed{\lozenge}\end{example}
}
\newcommand{\xqed}[1]{
    \leavevmode\unskip\penalty9999 \hbox{}\nobreak\hfill
    \quad\hbox{\ensuremath{#1}}}
\def\floor (#1) at (#2,#3); {
    \node[draw,ellipse, minimum width=1cm, minimum height = 0.6 cm] (#1) at (#2,#3) {$\bullet$} ;
}
\def\ufloor (#1) at (#2,#3) (#4); {
    \node[draw,ellipse, minimum width=1cm, minimum height = 0.6 cm] (#1) at (#2,#3) {\scriptsize #4} ;
}
\def\marked (#1) to (#2) pos=#3 in=#4 out=#5; {
   \draw (#1) to[out=#5,in=#4] node[pos=#3] {$\bullet$} (#2) ;
}
\def\leftedge (#1) to (#2) pos=#3 in=#4 out=#5 w=#6; {
   \draw (#1) to[out=#5,in=#4] node[midway,left] {$#6$} (#2) ;
}
\def\leftmarked (#1) to (#2) pos=#3 in=#4 out=#5 w=#6; {
   \draw (#1) to[out=#5,in=#4] node[pos=#3] {$\bullet$} node[midway,left] {$#6$} (#2) ;
}
\def\wlmarked (#1) to (#2) pos=#3 in=#4 out=#5 w=#6; {
   \draw (#1) to[out=#5,in=#4] node[pos=#3] {$\bullet$} node[midway,left] {$#6$} (#2) ;
}
\def\rightedge (#1) to (#2) pos=#3 in=#4 out=#5 w=#6; {
   \draw (#1) to[out=#5,in=#4] node[midway,right] {$#6$} (#2) ;
}
\def\rightmarked (#1) to (#2) pos=#3 in=#4 out=#5 w=#6; {
   \draw (#1) to[out=#5,in=#4] node[pos=#3] {$\bullet$} node[midway,right] {$#6$} (#2) ;
}
\def\doublemarked (#1) to (#2) pos=#3 in=#4 out=#5; {
   \draw (#1) to[out=#5,in=#4] node[pos=#3] {$\bullet$} (#2) ;
}
\def\l@subsection{\@tocline{2}{0pt}{2.5pc}{5pc}{}}
\renewcommand{\l@section}{\@tocline{1}{0pt}{10pt}{1pc}{\bfseries}}
\title{A short proof of the multiple cover formula for point insertions} 
\author{Thomas Blomme}
\address{Université de Neuchâtel, rue \'Emile Argan 11, Neuchâtel 2000, Suisse}
\email{thomas.blomme@unine.ch}
\subjclass[2020]{Primary  14N10, 14T90, 14K12; Secondary  14N35}
\keywords{Enumerative geometry, abelian surfaces, multiple cover formula}
\begin{document}

\maketitle

\begin{abstract}
A few years ago, G. Oberdieck conjectured a multiple cover fomula that determines the number of curves of fixed genus and degree passing through a configuration of points in an abelian surface. This formula was proved by the author using tropical techniques and Nishinou's correspondence theorem. Using the same techniques, we give a much shorter proof of the multiple cover formula for point insertions, relying on the same geometrical idea, but avoiding any kind of tropical enumeration.
\end{abstract}

\tableofcontents


\section{Introduction}

\subsection{Setting and enumerative problem} Compact complex surfaces with trivial canonical bundle are either K3 surfaces, already at the center of many studies, or abelian surfaces, for which there also exists a vast literature, though they seem to be less studied in the realm of enumerative geometry.

As a complex variety, an abelian variety $\CC A$ is a complex torus, \textit{i.e.} the quotient of $\CC^2$ by a rank $4$ lattice $L$. Through the exponential map, it may also be seen as a quotient of $(\CC^*)^2$ by a rank $2$ lattice $\Lambda$. To become an abelian variety, the complex torus needs to be endowed with the choice of a \textit{polarization} \cite[Chapter 2.6]{griffiths2014principles}, which is a skew-symmetric form $Q\in\wedge^2 L^*$ satisfying conditions known as \textit{Riemann-bilinear relations} (see Definition\ref{defi-riemann-bilin}), or alternatively an ample line bundle, of which $Q$ is actually the chern class.

The most naive enumerative geometry in abelian surfaces consists in counting the number of genus $g$ curves in a given homology class $\beta\in H_2(\CC A)\simeq\wedge^2 L$ passing through $g$ points. This problem was addressed and solved in \cite{bryan1999generating} for \textit{primitive classes}, which are elements of the second homology group $\wedge^2 L$ that cannot be expressed as a multiple of a smaller class.

The above enumerative problem can be set in the reduced Gromov-Witten theory framework, where it is shown to be deformation invariant: it does not depend on the abelian surface and only depends on the curve class $\beta$ through its divisibility $d$ and its self-intersection $\beta^2=2d^2n$. We denote the invariant by $N_{g,d,n}$.

Though the primitive case has been known for some times (see \cite{bryan1999generating} and \cite{bryan2018curve} for computation of other reduced Gromov-Witten invariants), tackling the computation of invariants for \textit{divisible} classes turns out to be quite challenging. Only a handful of computations are known in this case.

\subsection{Tropical limit and result}

In 2020, T.~Nishinou proved a correspondence theorem \cite{nishinou2020realization} for curves passing through points in abelian surfaces, adapting to the abelian setting the tropical correspondence theorem for curves in toric surfaces proved independently by G.~Mikhalkin \cite{mikhalkin2005enumerative} and T.~Nishinou-B.~Siebert \cite{nishinou2006toric}. The correspondence theorem transforms the enumerative problem into a combinatorial problem dealing with some piecewise linear graphs called \textit{tropical curves} in a real torus $\RR^2/\Lambda$, here known as \textit{tropical abelian surface}.

The correspondence theorem theoretically allows the computation of the above mentioned invariants for any curve class, provided one is able to deal with the combinatorics of the tropical enumerative problem. Without any further assumption, this is a difficult problem. Assuming the tropical abelian surface and the point configuration are \textit{stretched}, this problem was solved by the author in a series of paper \cite{blomme2022abelian1,blomme2022abelian2,blomme2022abelian3}. The highlight comes in \cite{blomme2022abelian3} with the proof a \textit{multiple cover formula} satisfied by the invariants, which we now state.

\begin{theom}\textbf{\ref{theo-multiple-cover}}
    The invariants $N_{g,d,n}$ satisfy the multiple cover formula
    $$N_{g,d,n} = \sum_{k|d}k^{4g-3}N_{g,1,\left(\frac{d}{k}\right)^2n}.$$
\end{theom}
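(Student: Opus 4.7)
The plan is to prove the formula by stratifying the stable maps contributing to $N_{g,d,n}$ according to the largest isogeny of abelian surfaces through which they factor. This realizes algebraically the same geometric idea as in the tropical proof: curves of divisibility $d$ in $\CC A$ are captured by primitive curves on suitable isogenous covers. Nishinou's correspondence theorem provides the passage between the reduced GW invariant and the count of the relevant holomorphic objects, but the tropical-curve combinatorics is replaced by a direct isogeny-theoretic computation.

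The central step is a factorization principle. For each $k \mid d$, one considers sublattices $L' \subset L$ of index $k^2$ for which the class $d\alpha$ is primitive in $\wedge^2 L'$; each such $L'$ gives an isogeny of abelian surfaces $\pi \colon \tilde A \to \CC A$ with $\tilde A = \CC^2/L'$. One then shows that every stable map $f \colon C \to \CC A$ of class $d\alpha$ factors uniquely, for a unique $k \mid d$, as $f = \pi \circ \tilde f$ with $\tilde f \colon C \to \tilde A$ of primitive image class $\tilde\alpha$. A direct computation comparing the intersection pairings on $\wedge^2 L$ and $\wedge^2 L'$ (which differ by the index $k^2$) gives $\tilde\alpha^2 = 2(d/k)^2 n$, so $\tilde f$ is counted by the primitive invariant $N_{g,1,(d/k)^2 n}$ on $\tilde A$ by deformation invariance of the reduced GW theory.

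The remaining step is to assemble the multiplicity $k^{4g-3}$ from three ingredients: (a) the number of lifts of the $g$-point configuration from $\CC A$ to $\tilde A$, a power of $k^2$ modulated by compatibility constraints coming from the abelian group law; (b) the number of admissible sublattices $L'$ (equivalently, isogenies $\pi$) of index $k^2$ realizing each factorization; and (c) an automorphism factor from $\ker \pi$ of order $k^2$ acting on lifted pairs (stable map, configuration). Using translation invariance of the reduced theory to pass from specific to generic configurations, the computation reduces to a group-theoretic count involving the $k$-torsion subgroup $\CC A[k]$ and standard identities for the number of sublattices of index $k^2$ compatible with a fixed primitive class.

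The main obstacle will be checking that the strata indexed by $k \mid d$ partition the relevant moduli space without overcounting, and that the combinatorial bookkeeping in (a)--(c) combines precisely into $k^{4g-3}$. One must also verify the factorization lemma itself: the existence of a minimal isogeny through which $f$ factors, and uniqueness up to the explicit symmetries listed above. Running the argument cleanly may require an a priori translation-invariance argument on the moduli of stable maps, reducing everything to counting on a single fiber, together with a careful matching of the primitive invariant on $\tilde A$ with the universal invariant $N_{g,1,(d/k)^2 n}$ on an arbitrary abelian surface.
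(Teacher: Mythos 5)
There is a genuine gap, and it is at the heart of your argument: the ``factorization principle'' is false for $g\geq 3$. A stable map $f\colon C\to \CC A$ whose class is divisible need not factor through any nontrivial isogeny. Concretely, take $\beta=d\alpha$ with $\alpha$ of type $(1,1)$ and let $C$ be an irreducible curve in the linear system of the corresponding polarization of type $(d,d)$: this is an ample divisor, and by the Lefschetz-type theorem for abelian varieties the map $\pi_1(C)\to\pi_1(\CC A)=L$ is surjective, so $f$ lifts to no proper finite cover $\CC^2/L'\to\CC A$, even though its class has divisibility $d$. Divisibility of the homology class is simply not detected by the image of $\pi_1$, so your strata ``largest isogeny through which $f$ factors'' do not partition the moduli space according to $k\mid d$, and the terms with $k>1$ in the formula are not accounted for by curves lifted from isogenous surfaces. (The isogeny picture does govern the special case $g=2$, where a genus~$2$ curve factors through its Jacobian, which is why classical genus-$2$ counts use exactly this device; it does not extend to higher genus, and this is precisely why Oberdieck's multiple cover formula is a nontrivial statement rather than a covering-space computation.) In addition, even granting a factorization, the multiplicity $k^{4g-3}$ in steps (a)--(c) is only announced, not derived; note that the exponent $4g-3$ is odd and grows with $g$, whereas counts of lifts of points, sublattices of index $k^2$, and $\ker\pi$-actions produce powers of $k$ with exponents independent of $g$ or growing like the number of inserted points, so the bookkeeping as described cannot assemble into $k^{4g-3}$ without an input that scales with the curve itself.

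The paper's proof is structured quite differently and avoids any stratification of the complex moduli space. It fixes one tropical abelian surface $\RR^2/\Lambda$ and one degree matrix $B$, and compares two Mumford families $\AAA(Z_0,S)$ and $\AAA(Z_1,S)$ with the \emph{same} tropicalization but polarizations $Q_0$, $Q_1$ of divisibility $d$ and $1$ respectively (they differ only by the twist $T$). Nishinou's correspondence theorem is applied to both: the tropical solutions through the point configuration are literally the same finite set, and the only difference is the realizability (phase) condition, which reads $\delta_\Gamma\mid\tau$; for $\tau=0$ all tropical curves contribute (giving $N_{g,d,n}$), for $\tau=1$ only those with gcd $1$ contribute (giving $N_{g,1,d^2n}$). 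The factor $k^{4g-3}$ then comes from the $(4g-3)$-homogeneity of the tropical multiplicity $\delta_\Gamma\prod m_V$ under scaling all edge weights by $k$, via the bijection between gcd-$k$ curves of degree $B$ and gcd-$1$ curves of degree $B/k$. If you want to salvage an algebraic argument in the spirit of your proposal, the object playing the role of your isogenies is not a cover of $\CC A$ but the pair of non-isomorphic degenerations (twisted versus untwisted chains of $E\times\PP^1$) described in the introduction; the selection of the $k$-th term happens through which curves survive the twist, not through which curves lift.
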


In particular, the above multiple cover formula reduces the computation for non-primitive classes to primitive classes, for which we have an explicit formula from \cite{bryan1999generating}, also possible to achieve via tropical methods and the correspondence theorem.

This multiple cover formula is actually a specific case of a more general formula of the same shape that should be satisfied by all reduced Gromov-Witten invariants of abelian surfaces, as conjectured by G.~Oberdieck in \cite{oberdieck2022gromov}: the $N$ should be replaced by some GW-invariant and only the exponent $4g-3$ of $k$ may change. In the case of point insertions, this formula is the one stated in Theorem \ref{theo-multiple-cover}, and was already proven by the author in \cite{blomme2022abelian3}.

\subsection{Idea of proof}

The proof presented in \cite{blomme2022abelian3} is quite technical and relies on the fact that it is possible to choose tropical abelian surfaces with non-equivalent polarizations for which the tropical curves are \textit{floor decomposed}, \textit{i.e.} enclosed in a discrete data called \textit{diagrams}. The curve count is recovered by painful computations. The key is that the same diagrams can be used to compute the invariants for both a primitive class and a non-primitive class. One finally observes that the multiple cover formula is already satisfied at the level of diagrams.

Algebraically, the proof from \cite{blomme2022abelian3} may be seen as follows: it is possible to construct two families of abelian surfaces with distinct polarizations that degenerate to a central fiber which is chain of $E\times\PP^1$ (with $E$ an elliptic curve) glued along their divisors $E\times\{0/\infty\}$. Though the pieces are the same, the central fibers differ by a twist (translation) in the $E$ direction when going around the chain of $\PP^1$. This twist is responsible for the distinct polarizations. A decomposition formula breaks the invariants as a sum of pieces indexed by \textit{diagrams}, and the multiple cover formula is already true at the level of diagrams.

In \cite{blomme2022abelian3}, the construction from the previous paragraph is achieved tropically by choosing a \textit{tropical twist}: the underlying tropical abelian surfaces are not the same though the diagrams are the same. The idea of this paper is to choose instead a \textit{complex twist}, so that the tropical abelian surfaces are this time the same. Therefore, the curves to which we apply the correspondence theorem are the same, and the multiple cover formula may now be seen at the level of the tropical curves. This way, the proof avoids any kind of tropical enumeration, solely relying of the existence of tropical correspondence, and no explicit solving.

The method of this paper could probably be generalized for other reduced Gromov-Witten invariants if one had a suitable decomposition formula, which is for now unknown. In particular, this method does not yet work to prove the multiple cover formula for refined invariants from \cite{blomme2022abelian3} since the latter are not related to anything on the complex setting yet.

\subsection{Plan of the paper}

In the second section we recall some basics about abelian surfaces, both complex and tropical. In the third section, we explain how a polarization on a Mumford family yields a tropical polarization of its tropicalization. In Section 4, we recall the setting of Nishinou's correspondence theorem \cite{nishinou2020realization} for Mumford families of abelian surfaces before proving the multiple cover formula in Section 5.

\textit{Acknowledgements.} The author would like to thank Francesca Carocci, for the 5 min conversation that lead to an epiphany and the surprising revelation that the same idea from \cite{blomme2022abelian3} actually yielded a much shorter proof.

\section{Abelian surfaces, curves and deformations}

    \subsection{Complex abelian surfaces}

    We start with the definition of a complex torus.

    \begin{defi}
        A complex torus is a quotient $\CC^2/L$, where $L$ is rank $4$ lattice in $\CC^2$.
    \end{defi}

    We denote a complex abelian surface by $\CC A$. We write $\Omega$ the $2\times 4$ \textit{period matrix} whose columns span $L$. Choosing a $\CC$-basis of $\CC^2$ inside $L$, we can write $L=\ZZ^2\oplus\Lambda\subset\CC^2$, where $\Lambda$ is a supplement of $\ZZ^2$. Via the exponential map coordinate by coordinate, we obtain a biholomorphic map
    $$\CC^2/L \to (\CC^*)^2/\Lambda,$$
    where the map $\Lambda\to (\CC^*)^2$ is the composition of the inclusion $\Lambda\to\CC^2$ and the coordinate exponential.

    \begin{rem}
        Composing with the logarithmic map $\log|-|:(\CC^*)^2\to\RR^2$, both lattices $\ZZ^2$ and $\Lambda$ may be seen in $\RR^2$: $\Lambda$ via the log map and $\ZZ^2$ via the canonical inclusion.
        
        An orientation of a lattice is a generator of its top-exterior power. We can take orientations of $\ZZ^2$ and $\Lambda$ that induce the same orientation of $\RR^2$, yielding a well-defined orientation of $L$. The orientation it induces on $\CC^2$ is however the opposite of the complex orientation. This is due to the fact that the two middle coordinates are switched.
    \end{rem}

    From now on, we assume that $L=\ZZ^2\oplus\Lambda$, and we have chosen compatible orientations of $\ZZ^2$ and $\Lambda$. The second homology and cohomology groups of $\CC A$ are respectively $\wedge^2 L$ and $\wedge^2 L^*$. Following \cite[Chapter 2.6]{griffiths2014principles}, we now define a polarization of a complex torus.
    
    \begin{defi}\label{defi-riemann-bilin}
        A polarization on $\CC A=\CC^2/L$ is an invertible skew-symmetric integer matrix $Q$ satisfying Riemann bilinear relations:
        $$\left\{ \begin{array}{l}
            \Omega Q^{-1}\Omega^\intercal =0, \\
            -i\Omega Q^{-1}\overline{\Omega}^\intercal  \text{ is (hermitian and) positive definite.}
        \end{array}\right.$$
    \end{defi}

    More intrinsically, $Q$ is an element of $\wedge^2L^*$, skew-symmetric forms on $L$, with skew-symmetric matrices after the choice of a basis of $L$ (\textit{i.e.} $\ZZ^2$ and $\Lambda$).

    Using the orientation of $L$ given by the choice of compatible orientations on $\ZZ^2$ and $\Lambda$, we have the pairing coming from the wedge-product:
    $$\wedge^2L^*\otimes\wedge^2L^*\longrightarrow \wedge^4L^*\simeq\ZZ.$$
    The associated quadratic form is actually twice the \textit{Pfaffian}.

    \begin{rem}
    As $H^2(\CC A)=\wedge^2L^*$, the integer matrix $Q$ is in fact the chern class of an ample line bundle $\L$ on $\CC A$. The zero-locus of a section of $\L$ is thus Poincar\'e dual to $Q\in H^2(\CC A)$. Due to the change of orientation, the intersection pairing on $H^2(\CC A)$ coming from the cup-product and the complex orientation is actually the opposite of the above Pfaffian pairing.
    \end{rem}

    \subsection{Poincaré duality and polarization}
    
    Before going to the tropical world, we give a formulation of Poincaré duality assuming the polarization takes a particular triangular form in the decomposition $\ZZ^2\oplus\Lambda$.
    
    \begin{lem}
        Assume $Q=\left(\begin{smallmatrix} 0 & C \\ -C^\intercal & T \end{smallmatrix}\right)$ for some integer matrices $C,T$. Then, we have the following:
        \begin{enumerate}
            \item The matrix $T$ is skew-symmetric of size $2$, its upper-right coefficient is equal to its Pfaffian, and $\operatorname{Pf}(Q)=-\det C$.
            \item If $B$ is the comatrix of $C$, we have
                $$\operatorname{Pf}(Q)Q^{-1} = \begin{pmatrix}
                -T & B \\
                -B^\intercal & 0
                \end{pmatrix}.$$
            \item The map $Q\in\wedge^2L^*\mapsto \operatorname{Pf}(Q)Q^{-1}\in\wedge^2L$ is actually the Poincar\'e duality map $H^2(\CC A)\to H_2(\CC A)$.
        \end{enumerate}
    \end{lem}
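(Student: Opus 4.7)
For (1), the skew-symmetry of $Q$ forces the diagonal block $T$ to be skew-symmetric, hence of the form $tJ$ with $J = \left(\begin{smallmatrix} 0 & 1 \\ -1 & 0\end{smallmatrix}\right)$, and $\mathrm{Pf}(T) = t$ is the upper-right entry by direct inspection. Writing $C = \left(\begin{smallmatrix} a & b \\ c & d\end{smallmatrix}\right)$ and applying the formula $\mathrm{Pf}(Q) = Q_{12}Q_{34} - Q_{13}Q_{24} + Q_{14}Q_{23}$ to the block form, the first product vanishes because the upper-left $2\times 2$ block of $Q$ is zero, and the remaining two terms collapse to $bc - ad = -\det C$.

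For (2), I would verify $Q \cdot M = \mathrm{Pf}(Q)\,I_4$ directly where $M = \left(\begin{smallmatrix} -T & B \\ -B^\intercal & 0 \end{smallmatrix}\right)$. Block multiplication produces $-CB^\intercal$ and $-C^\intercal B$ on the diagonal, both equal to $-\det C \cdot I_2 = \mathrm{Pf}(Q)\,I_2$ by the defining property of the comatrix. The upper-right block vanishes by inspection, and the lower-left block $C^\intercal T - T B^\intercal$ reduces, upon writing $T = tJ$, to the identity $C^\intercal J = J B^\intercal$, a short calculation using the explicit formula for the comatrix of a $2\times 2$ matrix. The crux of this step is specific to dimension two: every skew-symmetric $2\times 2$ matrix is automatically a scalar multiple of $J$.

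For (3), I would identify the linear map $Q\mapsto \mathrm{Pf}(Q)Q^{-1}$ with Poincar\'e duality by comparing what each returns when paired against an arbitrary $Q' \in \wedge^2 L^*$. Differentiating the identity $\mathrm{Pf}(Q+tQ')^2 = \det(Q+tQ')$ at $t=0$ yields
$$B_{\mathrm{Pf}}(Q,Q') := \left.\tfrac{d}{dt}\mathrm{Pf}(Q+tQ')\right|_{t=0} = \tfrac{1}{2}\mathrm{Pf}(Q)\,\mathrm{tr}(Q^{-1}Q'),$$
where $B_{\mathrm{Pf}}$ is the polar form of the Pfaffian (normalized so $B_{\mathrm{Pf}}(Q,Q) = 2\mathrm{Pf}(Q)$). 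On the other hand, the natural evaluation pairing $\wedge^2 L^* \times \wedge^2 L \to \ZZ$, expressed in matrix form, reads $\langle Q', X\rangle = -\tfrac{1}{2}\mathrm{tr}(Q'X)$ for skew-symmetric $Q'$ and $X$; specializing to $X = \mathrm{Pf}(Q)Q^{-1}$ and using the cyclicity of the trace produces exactly $-B_{\mathrm{Pf}}(Q,Q')$. Since the remark preceding the lemma identifies the cup-product pairing on $H^2(\CC A)$, with respect to the complex orientation, as the negative of the Pfaffian pairing, the defining relation of Poincar\'e duality $\langle Q', \mathrm{PD}(Q)\rangle = \int_{\CC A} Q\cup Q'$ matches the computation above, giving $\mathrm{PD}(Q) = \mathrm{Pf}(Q)Q^{-1}$.

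The main obstacle is part (3): each individual manipulation is short, but two independent sign reversals — one from the skew-symmetry convention in the matrix form of the evaluation pairing, and one from the complex orientation on $L \otimes \RR$ being opposite to the chosen compatible orientation — have to be tracked and must cancel as expected. Parts (1) and (2) are routine once the block form is expanded, provided one exploits the fact that a $2\times 2$ skew-symmetric matrix is forced to be a multiple of $J$ when handling the lower-left block in (2).
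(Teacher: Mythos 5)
Your argument is correct, but it differs from the paper's proof in two places, one cosmetic and one genuine. For (1) both proofs are the same direct computation. For (2) the paper first writes out $Q^{-1}$ explicitly in block form, $Q^{-1}=\left(\begin{smallmatrix} (C^{-1})^\intercal TC^{-1} & -(C^{-1})^\intercal \\ C^{-1} & 0 \end{smallmatrix}\right)$, and then multiplies by $\operatorname{Pf}(Q)=-\det C$, using that a $2\times 2$ skew-symmetric matrix is determined by its Pfaffian to conclude $(\det C)(C^{-1})^\intercal TC^{-1}=T$; you instead verify $QM=\operatorname{Pf}(Q)I_4$ for the candidate $M=\left(\begin{smallmatrix} -T & B \\ -B^\intercal & 0\end{smallmatrix}\right)$, which is the same computation run in the other direction and exploits the same dimension-two fact through the identity $C^\intercal J=JB^\intercal$ — a matter of taste, with your version avoiding the explicit inverse. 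The genuine difference is in (3): the paper settles it ``by a direct computation on a basis of $\wedge^2 L$ and $\wedge^2 L^*$'', whereas you argue coordinate-freely, pairing $\operatorname{Pf}(Q)Q^{-1}$ against an arbitrary $Q'$ via the trace formula $\langle Q',X\rangle=-\tfrac12\mathrm{tr}(Q'X)$ and the polar form of the Pfaffian, $\tfrac{d}{dt}\operatorname{Pf}(Q+tQ')\big|_{t=0}=\tfrac12\operatorname{Pf}(Q)\,\mathrm{tr}(Q^{-1}Q')$ (legitimate here since a polarization is invertible), then importing the orientation sign from the remark preceding the lemma. Your route makes the two sign reversals explicit and shows why they cancel, and it would extend to higher-dimensional abelian varieties; its cost is that the orientation sign is inherited from the remark (which the paper asserts without proof) rather than being re-verified by the basis computation, and that it leans on convention choices for the evaluation pairing and for the defining relation $\langle Q',\mathrm{PD}(Q)\rangle=\int_{\CC A}Q\cup Q'$, both of which you track correctly.
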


    \begin{proof}
    The first point follows from a direct computation. For the second point, the inverse matrix is given by
    $$Q^{-1} = \begin{pmatrix}
        (C^{-1})^\intercal TC^{-1} & -(C^{-1})^\intercal \\
        C^{-1} & 0
    \end{pmatrix}.$$
    We set $B=(\det C)(C^{-1})^\intercal$, which is actually the comatrix of $C$. In particular, it has integer coefficients. The Pfaffian of $(C^{-1})^\intercal TC^{-1}$, equal to its upper-right coefficient, is $\operatorname{Pf}(T)/\det C$, so that $(\det C)(C^{-1})^\intercal TC^{-1}=T$. Therefore, since $\operatorname{Pf}(Q)=-\det C$, we get the desired expression.
    
    The matrix $\operatorname{Pf}(Q)Q^{-1}$ is an element of $\wedge^2L=H_2(\CC A)$. It coincides with the Poincaré duality $H^2(\CC A)\to H_2(\CC A)$ by a direct computation on a basis of $\wedge^2L$ and $\wedge^2L^*$.
    \end{proof}
    
    Up to a change of basis of $L$, it is possible to assume that $T=0$ and $C$ is diagonal, so that $Q$ is in \textit{diagonal antidiagonal form}:
    $$Q=\left(\begin{smallmatrix}
    0 & 0 & d_1 & 0 \\
    0 & 0 & 0 & d_2 \\
    -d_1 & 0 & 0 & 0 \\
    0 & -d_2 & 0 & 0
    \end{smallmatrix}\right),$$
    where $d_1|d_2$. The numbers $d_1,d_2$ can actually be recovered as follows: $d_1$ is the divisibility of $Q$, \textit{i.e.} the gcd of the coefficients, and $d_1d_2=|\operatorname{Pf}(Q)|$. The pair $(d_1,d_2)$ is called the type of the polarization. However, finding such a basis may require to choose a different decomposition of $L=\ZZ^2\oplus\Lambda$, \textit{i.e.} the change of basis may not preserve the given decomposition.
    

    \subsection{Tropical abelian surfaces}

    We recall some basics on tropical abelian surfaces and tropical curves in the latter, following \cite{mikhalkin2008tropical}, \cite{blomme2022abelian1} and \cite{nishinou2020realization}.

    \begin{defi} Let $\Lambda$ be a rank $2$ lattice with an inclusion in $\RR^2$. Choosing a basis of $\Lambda$, the inclusion is given by a matrix $S$.
        \begin{itemize}
            \item The quotient $\TT A=\RR^2/\Lambda$ is called a tropical torus.
            \item A polarization on a tropical torus $\TT A$ is a linear map $C:\Lambda\to(\ZZ^2)^*$ (with $C$ as in chern) such that the bilinear map induced by the composition $S^\intercal C:\Lambda\to\Lambda^*_\RR$ is symmetric and positive definite.
            \item A tropical abelian variety is a tropical torus endowed with a choice of polarization.
        \end{itemize}
    \end{defi}

    \begin{rem}
        As $C:\Lambda\to(\ZZ^2)^*$ and $S:\Lambda\to\RR^2$, the composition $S^\intercal C$ does indeed make sense as a map $\Lambda\to\Lambda^*_\RR$, which induces a bilinear map $\Lambda\otimes\Lambda\to\RR$, which we may ask to be symmetric.
    \end{rem}

    Taking the adjoint, we have $C^\intercal:\ZZ^2\to\Lambda^*$, and $C$ can also be seen as an element of $\Lambda^*\otimes(\ZZ^2)^*$. The reason is that as in the classical case, the polarization is actually the (tropical) chern class of an ample tropical line bundle, \textit{i.e.} an element of $H^1(\TT A,(\ZZ^2)^*)=\Lambda^*\otimes(\ZZ^2)^*$.

\medskip

    An abstract (irreducible) tropical curve is (connected) metric graph $\Gamma$ with some unbounded edges called ends. The genus of an irreducible tropical curve is its first Betti number. The ends are used to encode marked points and avoid dealing with marked vertices. A tropical curve is trivalent if every vertex is adjacent to exactly three edges (or ends).

    \begin{defi} (parametrized tropical curve)
        \begin{itemize}
            \item A parametrized tropical curve is a map $h:\Gamma\to\TT A$ from an abstract tropical curve $\Gamma$, which is affine with integer slope on the edges, contracts the ends, and satisfies the balancing condition at vertices.
            \item The degree is the homology class realized by the curve in $H_{1,1}(\TT A)=H_1(\TT A,\ZZ^2)=\Lambda\otimes\ZZ^2$.
            \item The gcd $\delta_\Gamma$ of a parametrized tropical curve $h:\Gamma\to\TT A$ is the gcd of the lattice lengths of the slopes.
        \end{itemize}
    \end{defi}

    The degree of a tropical curve is denoted by $B$, as curve classes in the complex setting are usually denoted by $\beta$. The degree $B$ may be seen as a map $\Lambda^*\to\ZZ^2$ and written as a matrix after a choice of coordinates. Its transpose has the following explicit description: for $\varphi\in(\ZZ^2)^*$, we have
    $$B^\intercal(\varphi)=\sum_e \varphi(n_e)e\in H_1(\TT A,\ZZ)\simeq\Lambda,$$
    where the sum is over the edges of $\Gamma$, which are $1$-simplices in $\Lambda$, and $n_e$ is their slope. As the slope $n_e$ is reversed when the orientation is reversed, the sum does not depend on the orientation of the edges. The sum is indeed a cycle due to the balancing condition. Seen as an element of $\Lambda$, denoting by $l_e$ the length of the edge $e$, the isomorphism with $\Lambda$ is given by
    $$B^\intercal(\varphi) = \sum_e \varphi(n_e)l_e n_e \in\Lambda\subset\RR^2.$$

    \begin{lem}
        If $h:\Gamma\to\TT A$ is a tropical curve of degree $B$, then $\det B>0$ and $BS^\intercal$ is symmetric and positive definite.
    \end{lem}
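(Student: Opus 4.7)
The strategy is to compute $BS^\intercal$ explicitly as a $2\times 2$ matrix in terms of the slopes and lengths of $\Gamma$. Plugging $\varphi = e_i^*$ into the displayed formula $B^\intercal(\varphi) = \sum_e \varphi(n_e) l_e n_e$ and tracking coordinates through the inclusion $S:\Lambda\hookrightarrow \RR^2$, one finds
$$SB^\intercal = \sum_e l_e\, n_e\, n_e^\intercal,$$
the sum running over the bounded edges of $\Gamma$ (ends are contracted, hence contribute zero slope). Transposing, $BS^\intercal$ equals the same matrix, which immediately yields the symmetry claim.

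Positive semi-definiteness is then transparent: the associated quadratic form on $\RR^2$ reads $v \mapsto \sum_e l_e (v\cdot n_e)^2$, a sum of squares with positive weights $l_e>0$. For strict positive definiteness I would argue that the slopes $n_e$ must span $\RR^2$. If on the contrary all $n_e$ were proportional to a single vector $v_0$, the image $h(\Gamma)$ would lie in a translate of the one-parameter subgroup spanned by $v_0$, so $B$ would factor through a proper subtorus and would have rank at most one, contradicting the assumption that $B$ is the degree of an honest enumerative curve class in $\TT A$. Excluding this degenerate situation, the sum of squares $\sum_e l_e (v\cdot n_e)^2$ vanishes only at $v=0$, and $BS^\intercal$ is positive definite.

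The inequality $\det B > 0$ is then a bookkeeping consequence: from $\det(BS^\intercal) = \det B \cdot \det S$, the left-hand side is strictly positive by what precedes, while the compatible-orientation convention on $\ZZ^2$ and $\Lambda$ fixed earlier ensures $\det S > 0$, whence $\det B > 0$. The only conceptually non-trivial step is the non-degeneracy argument ruling out colinear slopes; everything else is a direct unpacking of the formula for $B^\intercal$ combined with the elementary observation that a sum of rank-one matrices $v v^\intercal$ is positive definite precisely when the $v$'s span.
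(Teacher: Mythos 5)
Your argument is correct and is essentially the paper's own proof: both compute the bilinear form attached to $BS^\intercal$ via the explicit formula for $B^\intercal(\varphi)$, obtaining $\sum_e l_e\,\varphi(n_e)\psi(n_e)$ (your $\sum_e l_e\, n_e n_e^\intercal$), from which symmetry and positivity are immediate, and then deduce $\det B>0$ from $\det S>0$. Your extra discussion ruling out collinear slopes is a detail the paper leaves implicit (it instead remarks that $\det B>0$ also follows from positivity of the tropical self-intersection), but it does not change the approach.
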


    \begin{proof}
        Let $\varphi\in(\ZZ^2)^*$. We have an explicit expression of $B^\intercal(\varphi)$ as an element of $\Lambda\subset\RR^2$. Take a second linear form $\psi\in(\ZZ^2)^*$, with restriction $S^\intercal\psi$ on $\Lambda$. We get
        $$\psi(SB^\intercal(\varphi))=\sum_e l_e\varphi(n_e)\psi(n_e).$$
        As $l_e>0$, the symmetry and the positivity follows. As $\det S>0$, we deduce that $\det B>0$, which also follows from the positivity of the self-intersection of the tropical curve.
    \end{proof}
    
    After a choice of basis, both degree and polarization are represented by a $2\times 2$ matrix, related by the Poincaré duality. The analog of the transformation $Q\mapsto \operatorname{Pf}(Q)Q^{-1}$ is here given by $C\mapsto (\det C)(C^{-1})^\intercal$, which is actually the comatrix of $C$. It is also the restriction of $Q\mapsto\operatorname{Pf}(Q)Q^{-1}$ on antidiagonal matrices:
    \begin{itemize}
        \item If $C$ is a polarization, its comatrix $B=(\det C)(C^{-1})^\intercal$ is the degree of a curve.
        \item Conversely, given a degree class $B$, its comatrix $C=(\det B)(B^{-1})^\intercal$ is a polarization.
    \end{itemize}
    With $B$ and $C$ related as above, $S^\intercal C$ is symmetric if and only if $BS^\intercal$ symmetric.

    Following \cite{blomme2022abelian1}, it is usually easier to find the matrix $B$ than $C$, as the matrix of $B:\Lambda^*\to\ZZ^2$ may be read by choosing a fundamental domain of $\RR^2/\Lambda$, and making the sum of slopes of the edges intersecting the right and top sides of the fundamental domain respectively.

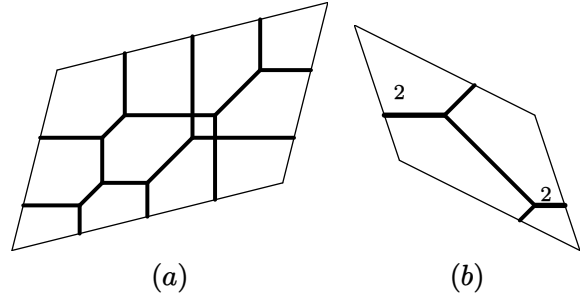
\begin{figure}
\begin{center}
\begin{tabular}{cc}
\begin{tikzpicture}[line cap=round,line join=round,x=0.3cm,y=0.3cm]
\clip(0,0) rectangle (14,11);
\draw [line width=0.5pt] (0,0)-- (12,3);
\draw [line width=0.5pt] (0,0)-- (2,8);
\draw [line width=0.5pt] (2,8)-- (14,11);
\draw [line width=0.5pt] (12,3)-- (14,11);

\draw [line width=1.5pt] (3,2)-- (4,3);
\draw [line width=1.5pt] (4,3)-- (4,5);
\draw [line width=1.5pt] (4,3)-- (6,3);
\draw [line width=1.5pt] (6,3)-- (8,5);
\draw [line width=1.5pt] (0.5,2)-- (3,2);
\draw [line width=1.5pt] (3,0.75)-- (3,2);
\draw [line width=1.5pt] (4,5)-- (5,6);
\draw [line width=1.5pt] (1.25,5)-- (4,5);
\draw [line width=1.5pt] (5,6)-- (5,8.75);
\draw [line width=1.5pt] (5,6)-- (9,6);
\draw [line width=1.5pt] (9,6)-- (11,8);
\draw [line width=1.5pt] (11,8)-- (13.25,8);
\draw [line width=1.5pt] (11,8)-- (11,10.25);
\draw [line width=1.5pt] (8,5)-- (8,9.5);
\draw [line width=1.5pt] (8,5)-- (12.5,5);
\draw [line width=1.5pt] (6,3)-- (6,1.5);
\draw [line width=1.5pt] (9,6)-- (9,2.25);

\begin{scriptsize}

\end{scriptsize}
\end{tikzpicture}
&
\begin{tikzpicture}[line cap=round,line join=round,x=0.3cm,y=0.3cm]
\clip(0,0) rectangle (10,10);
\draw [line width=0.5pt] (2,4)-- (0,10);
\draw [line width=0.5pt] (2,4)-- (10,0);
\draw [line width=0.5pt] (0,10)-- (8,6);
\draw [line width=0.5pt] (10,0)-- (8,6);

\draw [line width=2pt] (1.33333333333,6)-- (4,6);
\draw [line width=1.5pt] (4,6)-- (8,2);
\draw [line width=1.5pt] (4,6)-- (5.333333333,7.33333333);
\draw [line width=1.5pt] (8,2)-- (7.333333333,1.33333333);
\draw [line width=2pt] (8,2)-- (9.3333333333,2);

\begin{scriptsize}
\draw (2,7) node {$2$};
\draw (8.5,2.5) node {$2$};
\end{scriptsize}
\end{tikzpicture}
\\
$(a)$ & $(b)$\\
\end{tabular}

\caption{\label{figure example tropical curves}Two examples of tropical curves in tropical tori.}
\end{center}
\end{figure}

    \begin{expl}
        On Figure \ref{figure example tropical curves} we see two examples of tropical curves in their respective abelian surface. In each case, the basis of $\ZZ^2$ is the canonical basis, and the basis of $\Lambda$ is given by the bottom and left sides of the parallelogram.
            \begin{itemize}
                \item For the first tropical curve, the degree is $B=\left(\begin{smallmatrix}
                    2 & 0 \\ 0 & 3 \\
                \end{smallmatrix}\right)$, and the lattice is given by a matrix of the form $S=\left(\begin{smallmatrix}
                    \alpha & 2\gamma \\ 3\gamma & \beta \\
                \end{smallmatrix}\right)$, with $\alpha,\beta>0$ and $\det S>0$ for orientation purpose. The polarization is $C=\left(\begin{smallmatrix}
                    3 & 0 \\ 0 & 2 \\
                \end{smallmatrix}\right)$, and we may check that $BS^\intercal$ is positive definite.
                \item The second tropical curve gives an example where the matrices are not diagonal. In this situation, we have $B=\left(\begin{smallmatrix}
                    2 & 1 \\ 0 & 1 \\
                \end{smallmatrix}\right)$, the lattice is given by a matrix of the form $S=\left(\begin{smallmatrix}
                    \alpha & 2\gamma+\beta \\ \gamma & \beta \\
                \end{smallmatrix}\right)$. The polarization is $C=\left(\begin{smallmatrix}
                    1 & 0 \\ -1 & 2 \\
                \end{smallmatrix}\right)$, and we may check that $BS^\intercal$ is positive definite.
            \end{itemize}
    \end{expl}

    We finish with the definition of the Mikhalkin multiplicity of a tropical curve.

    \begin{defi}
        Let $h:\Gamma\to\TT A$ be a parametrized tropical curve with $\Gamma$ trivalent and $h$ an immersion (except for eventual contracted ends due to marked points). The Mikhalkin multiplicity of $\Gamma$ is $\prod m_V$, where the product is over trivalent vertices (not adjacent to a marked point) and $m_V$ is the index of the lattice spanned by adjacent slopes, or equivalently the determinant of two among the outgoing slopes.
    \end{defi}

\section{Deformation and Mumford families}

    \subsection{Mumford families}

    Let $D^\times\subset\CC$ be the punctured unit disk and $\Lambda$ a rank $2$ lattice with a preferred basis. Take two morphisms $S:\Lambda\to\RR^2$ given by a matrix $(s_{ij})$ and $Z:\Lambda\to\CC^2$ given by a matrix $(z_{ij})$. We construct the family of lattices in $\CC^2$ depending on a parameter $t\in D^\times$ given by the period matrix $(I\ Z_t)$, where
    $$Z_t=Z+S\frac{\log t}{2i\pi},$$
    where $\log t$ is a formal logarithm to the parameter $t$. To make it well-defined, assume that $S$ has values in $\QQ^2$. Write $S=\frac{S'}{r}$ with $S'$ having integer coefficients. Then, up to scaling, which amounts to a base change on $D^\times$ setting $t=(t')^r$, we can assume that $S$ has integer coefficients. Under the latter assumption, changing $\log t$ by some multiple $2i\pi$ does not change the lattice spanned by $(I\ Z_t)$.
    
    In the multiplicative setting, we set $\zeta_{ij}=e^{2i\pi z_{ij}}$ so that $(\zeta_{ij})$ gives the morphism $\Lambda\to(\CC^*)^2$. The complex torus is $(\CC^*)^2/\gen{(\zeta_{ij}t^{s_{ij}})}$.

    \begin{defi}
    Let $(Z,S)$ be chosen as above.
    \begin{itemize}
        \item The family of complex tori determined by the data $(Z,S)$ is called a Mumford family and is denoted by $\AAA(Z,S)$.
        \item The tropicalization of $\AAA(Z,S)$, denoted by $\TT\AAA(Z,S)$, is the tropical torus $\RR^2/\Lambda$ where $\Lambda\to\RR^2$ is the inclusion given by $S$.
    \end{itemize}
    \end{defi}

    By \textit{point} in $\AAA(Z,S)$, we mean a section of the form $t\mapsto (\xi_1 t^{u_1},\xi_2 t^{u_2})\in (\CC^*)^2/\Lambda_t$. Its tropicalization is the point $(u_1,u_2)\in\RR^2/\Lambda$.

    \subsection{Polarization on a Mumford family and tropicalization}

    We now give the definition of a polarization on a Mumford family, and study its relation with tropical polarizations.

    \begin{defi}
    Let $\AAA(Z,S)$ be a Mumford family. A matrix $Q$ is a polarization of the family if it induces a polarization on each torus of the family. The family is called a \textit{polarized Mumford family}.
    \end{defi}

    In \cite[Chapter 2.6]{griffiths2014principles}, given a polarization $Q$ on an abelian surface $\CC A$, it is possible to pick a basis where $Q$ is in diagonal anti-diagonal form: $Q=\left(\begin{smallmatrix}
    0 & \Delta \\ -\Delta & 0 \end{smallmatrix}\right)$, where $\Delta$ is a diagonal matrix with positive entries dividing each other. The entries of $\Delta$ are uniquely determined and form the \textit{type} of the polarization. In such a form, it is possible to deform $\CC A$ in a Mumford family where $Q$ keeps being a polarization. This is used in \cite{blomme2022abelian1} to apply Nishinou's correspondence theorem \cite{nishinou2020realization}. However, we do not actually need this anti-diagonal form to deform in a Mumford family keeping the polarization. The precise relation is as follows.

    \begin{prop}\label{prop-mumford-polarization}
        Let $\AAA(Z,S)$ be a Mumford family with $S$ an invertible real matrix. Polarizations on the family are the integer skew-symmetric matrices $Q$ satisfying the following conditions:
            \begin{enumerate}
                \item The matrix and its Poincar\'e dual have the following form:
                $$Q=\left(\begin{smallmatrix}
            0 & C \\ -C^\intercal & T
        \end{smallmatrix}\right) \text{ and }\operatorname{Pf}(Q)Q^{-1}=\left(\begin{smallmatrix}
            -T & B \\ -B^\intercal & 0
        \end{smallmatrix}\right),$$
                where $B$ and $C$ are the comatrix of each other,
                \item $BS^\intercal$ is symmetric and positive definite: $C$ is a tropical polarization of the tropicalization $\TT\AAA(Z,S)$,
                \item $Q$ satisfies Riemann bilinear relation for $\Omega=(I\ Z)$:
                    \begin{itemize}
                        \item $BZ^\intercal-ZB^\intercal=T$,
                        \item $-i\operatorname{Pf}(Q)(B\overline{Z}^\intercal-ZB^\intercal-T)$ is (hermitian and) positive definite.
                    \end{itemize}
            \end{enumerate}
        Conversely, given a polarization $Q$ on an abelian surface $\CC A$ determined by the period matrix $\Omega=(I\ Z)$ and having the form stated in (1), we can deform $\CC A$ in a Mumford family $\AAA(Z,S)$ by picking any $S$ such that $C$ induces a polarization of the associated tropical torus.
    \end{prop}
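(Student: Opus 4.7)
The plan is to expand the two Riemann bilinear relations for the period matrix $\Omega_t = (I\ Z_t) = (I\ Z + \tau S)$, with $\tau = \log t/(2i\pi)$, as polynomial/linear functions of $\tau$ respectively $\log|t|$, and match coefficients. Writing $Q^{-1}$ in block form $\left(\begin{smallmatrix} A & B' \\ -(B')^\intercal & D \end{smallmatrix}\right)$ with $A,D$ skew-symmetric of size two, the expansion
\[
\Omega_t Q^{-1}\Omega_t^\intercal = (A + B'Z^\intercal - Z(B')^\intercal + ZDZ^\intercal) + \tau(B'S^\intercal - S(B')^\intercal + ZDS^\intercal + SDZ^\intercal) + \tau^2 SDS^\intercal
\]
must vanish as a polynomial in $\tau$ for the first Riemann relation to hold for every $t$. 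The $\tau^2$-coefficient forces $D=0$ (since $S$ is invertible), which by inversion of the block form puts $Q$ in the shape of condition~(1), with $B=\mathrm{com}(C)$ by the lemma of Section~2.2. The $\tau$-coefficient then reduces to the symmetry of $B'S^\intercal$, equivalent to the symmetry of $BS^\intercal$ (first half of (2)), and the constant coefficient gives $T = BZ^\intercal - ZB^\intercal$ (first half of (3)).

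Using $D=0$ and the symmetry of $B'S^\intercal$, the second Riemann matrix collapses, thanks to $\bar\tau - \tau = i\log|t|/\pi$, into the linear function of $\log|t|$:
\[
-i\Omega_t Q^{-1}\bar\Omega_t^\intercal = -iB'(\bar Z - Z)^\intercal + \frac{\log|t|}{\pi}\, B'S^\intercal.
\]
Positive-definiteness over all $t\in D^\times$ (all $\log|t|\in(-\infty,0)$) is equivalent to positive-definiteness of the constant hermitian term (limit $\log|t|\to 0^-$) and of $-B'S^\intercal$ (dominant term as $\log|t|\to -\infty$). Scaling by $\operatorname{Pf}(Q)^2>0$ and using the identity $B\bar Z^\intercal - ZB^\intercal - T = B(\bar Z - Z)^\intercal$ together with $B'=B/\operatorname{Pf}(Q)$, the first is exactly the second half of (3), while the second pins the sign of $\operatorname{Pf}(Q)$ and upgrades the symmetry of $BS^\intercal$ to its positive-definiteness, giving~(2).

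The backward direction is read off the same computations: under (1)+(2)+(3), the polynomial expansion vanishes coefficient by coefficient, and $-i\Omega_t Q^{-1}\bar\Omega_t^\intercal$ stays positive-definite as a sum of a positive-definite constant term and a positive-definite linear-in-$\log|t|$ term (the sign of the latter being fixed by $\log|t|<0$ and $\operatorname{Pf}(Q)<0$). For the deformation statement, start from $\CC A=\CC^2/L$ with period matrix $(I\ Z)$ and $Q$ already in the form~(1): condition (3) holds since $Q$ is a polarization of this single fiber. It then suffices to choose an invertible real $S$ with (2), i.e.\ such that $C$ induces a tropical polarization of $\RR^2/S(\Lambda)$; the set of such $S$ is non-empty since $C$ is invertible. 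The equivalence just proved then certifies that $Q$ polarizes the entire Mumford family $\AAA(Z,S)$.

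The main obstacle is purely bookkeeping — distinguishing $Q^{-1}$ from $\operatorname{Pf}(Q)Q^{-1}$ (equivalently $B'$ from $B=\mathrm{com}(C)$) and tracking the sign of $\operatorname{Pf}(Q)$ to convert positive-definiteness statements between the two. Beyond this, the only conceptual observation is the elementary fact that a linear function $\alpha\mapsto M_0 + \alpha M_1$ of hermitian matrices is positive-definite on a half-line if and only if the constant term and the appropriately-signed coefficient are both positive-definite, which isolates the two conditions appearing in (2) and (3).
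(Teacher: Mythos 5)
Your proposal is correct and takes essentially the same route as the paper's proof: expand the first Riemann bilinear relation for $(I\ Z_t)$ as a polynomial in $\log t/(2i\pi)$ so that invertibility of $S$ kills the lower-right block of $Q^{-1}$ (giving (1)), the linear coefficient gives symmetry of $BS^\intercal$ and the constant one gives $BZ^\intercal-ZB^\intercal=T$, and then read the second relation as an affine function of $\log|t|$ whose two limiting regimes yield positive-definiteness of the constant term and of $BS^\intercal$. The only differences are bookkeeping (you work with the blocks of $Q^{-1}$ and rescale by $\operatorname{Pf}(Q)^2$, the paper works directly with $\operatorname{Pf}(Q)Q^{-1}$), and your ``pins the sign of $\operatorname{Pf}(Q)$'' step tacitly uses $\det S>0$, exactly as the paper's ``Since $\det S>0$, we also have $\det B>0$'' does.
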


    \begin{proof}
        Assume that $\operatorname{Pf}(Q)Q^{-1}=\left(\begin{smallmatrix}
        -T & B \\ -B^\intercal & K \end{smallmatrix}\right)$. Then, the first part of Riemann-bilinear relation for the period matrix $(I\ Z_t)$ writes itself as follows:
        $$-T+BZ^\intercal-ZB^\intercal +ZKZ^\intercal +(BS^\intercal-SB^\intercal + ZKS^\intercal + SKZ^\intercal)\frac{\log t}{2i\pi} + SKS^\intercal\left(\frac{\log t}{2i\pi}\right)^2 =0 .$$
        As the relation is true for every $t$ and a polynomial in $\frac{\log t}{2i\pi}$, we deduce that all coefficients are $0$.
        \begin{enumerate}
            \item As $S$ is invertible, the vanishing of the last coefficient implies that $K=0$, so that $\operatorname{Pf}(Q)Q^{-1}$ has the desired form. The form of $Q$ follows from the form of $\operatorname{Pf}(Q)Q^{-1}$, proving (1).
            \item Using $K=0$, the vanishing of the second coefficient is now equivalent to $BS^\intercal$ being symmetric.
            \item The vanishing of the first coefficient yields the first part of (3).
        \end{enumerate}

        We now write the second Riemann-bilinear condition: $-i(I\ Z_t)Q^{-1}(I\ \overline{Z_t})^\intercal$ is positive definite. We multiply by $\operatorname{Pf}(Q)^2=-(\det B)\operatorname{Pf}(Q)$. Using that $BS^\intercal$ is symmetric, we get the following positive matrix:
        \begin{align*}
             & i(\det B)\left( -T+B\overline{Z}^\intercal - ZB^\intercal + BS^\intercal\left( -\frac{\log t}{2i\pi}-\frac{\overline{\log t}}{2i\pi}\right) \right) \\
            = & i(\det B)(-T+B\overline{Z}^\intercal - ZB^\intercal) +\frac{\det B}{\pi}(-\log|t|)BS^\intercal. \\
        \end{align*}
        Taking $|t|\to 0$, we deduce that $(\det B)BS^\intercal$ is positive definite. Since $\det S>0$, we also have $\det B>0$ and $BS^\intercal$ is positive definite, finishing the proof of (2). Taking $|t|\to 1$, we finish the proof of (3).
        
        Conversely, starting with a polarization $Q$ of the given form for $\Omega=(I\ Z)$, picking any matrix $S$ such that $C$ induces a tropical polarization on the corresponding tropical torus, it is clear we get a polarized Mumford family.
    \end{proof}

    Writing $B=\left(\begin{smallmatrix}
        b_{11} & b_{21} \\ b_{12} & b_{22}
    \end{smallmatrix}\right)$, $Z=\left(\begin{smallmatrix}
        z_{11} & z_{21} \\ z_{12} & z_{22}
    \end{smallmatrix}\right)$ and $T=\left(\begin{smallmatrix}
        0 & \tau \\ -\tau & 0
    \end{smallmatrix}\right)$, the first Riemann condition on $B,Z,T$ is expressed as follows:
    $$b_{11}z_{12}+b_{21}z_{22}-b_{12}z_{11}-b_{22}z_{21}=\tau.$$

    As the induced tropical polarization on $\TT \AAA(Z,S)$ only depends on $C$ and not $T$, the latter may not be primitive even if the starting polarization $Q$ is primitive. On the curve side, it means the tropical degree $B$ may be divisible even though the complex curve class $\operatorname{Pf}(Q)Q^{-1}$ is not.
    
    Maybe more concretely, the class of a complex curve is $\beta=\operatorname{Pf}(Q)Q^{-1}$, Poincar\'e dual to the cher class. The degree of the trpicalization is only its upper-right term $B$. The classes $\operatorname{Pf}(Q)Q^{-1}$ and $\left(\begin{smallmatrix} 0 & B \\ -B^\intercal & 0 \end{smallmatrix}\right)$ may differ by a multiple of the fiber $(S^1)^2\hookrightarrow (\CC^*)^2/\Lambda$ of the tropicalization map, precisely this $\left(\begin{smallmatrix} -T & 0 \\ 0 & 0 \end{smallmatrix}\right)$.

    \section{Correspondence Theorem}

    \subsection{Enumerative problems}

    \subsubsection{Complex enumerative geometry} Let $\CC A$ be a complex abelian surface with $\beta\in H_2(\CC A,\ZZ)$ a curve class given by the polarization. We can consider the moduli space of genus $g$ stable maps with $n$ marked points realizing the homology class $\beta$, denoted by $\M_{g,n}(\CC A,\beta)$. It is endowed with a reduced virtual fundamental class $[\M_{g,n}(\CC A,\beta)]^\mathrm{red}$ of rank $g+n$. We have the evaluation map
    $$\ev : \M_{g,n}(\CC A,\beta)\longrightarrow (\CC A)^n.$$
    We get a reduced Gromov-Witten invariant by taking $n=g$ and pulling-back the product of cohomology classes Poincaré dual to a point in $\CC A$:
    $$N_{g,\beta} = \int_{[\M_{g,n}(\CC A,\beta)]^\mathrm{red}} \prod_1^n\ev_i^*(\pt).$$

    The number $N_{g,\beta}$ is deformation invariant in families of polarized abelian surfaces, and only depends on $\beta$ through its divisibility $d$ and self-intersection $\beta^2=2d^2n$. We thus denote the invariant $N_{g,d,n}$. It corresponds to the number of genus $g$ curves in the class $\beta$ passing through a generic configuration of $g$ points in $\CC A$.

    \subsubsection{Tropical enumerative geometry} The tropical point of view on enumerative geometry in tropical abelian surfaces is studied in \cite{blomme2022abelian1}. All statements recalled in this section are proved in the latter. Although the tropical invariance may be proven by tropical methods, it can also be deduced from Nishinou's correspondence theorem \cite{nishinou2020realization}.
    
    Let $\TT A$ be a tropical abelian surface with a curve class $B$ given by its tropical polarization. We have a moduli space of genus $g$ parametrized tropical curves realizing the class $B$, denoted by $\M_{g,n}(\TT A,B)$. We have the evaluation map
    $$\ev:\M_{g,n}(\TT A,B)\longrightarrow (\TT A)^n.$$
    Taking $g=n$ and given a generic configuration $\P$ of $g$ points in $\TT A$, there is a finite number of tropical curves passing through $\P$.
    
    If $\TT A$ is furthermore assumed to be generic among the tropical abelian surfaces with a given polarization, and $\P$ is still a generic point configuration in $\TT A$, these tropical curves are trivalent and immersed (no flat vertex). We can thus count the tropical curves passing through $\P$ multiplicity $\delta_\Gamma\prod m_V$. The count does not depend on $\P$ in $\TT A$ nor on $\TT A$. Furthermore, if we only count curves with a given gcd $k|B$, we also get an invariant count, denoted by $N_{g,B,k}^\mathrm{trop}$.

    The multiplicity $m_\Gamma=\delta_\Gamma\prod m_V$ is $(4g-3)$-homogeneous in the sense that if we multiply every edge weight by $k$, the multiplicity is multiplied by $k^{4g-3}$ (a $k$ for the gcd and a $k^2$ for each vertex). Therefore, we have that $N^\mathrm{trop}_{g,B,k}=k^{4g-3}N^\mathrm{trop}_{g,B/k,1}$. Furthermore, through a change of basis, the invariant only depends on $B$ through its divisibility $d$ and its determinant $d^2n$.

    \begin{rem}
    More precisely, the multiplicity provided by the correspondence theorem from \cite{nishinou2020realization} is $\prod_e w_e$ times some lattice index, and we can see this multiplicity is $(4g-3)$-homogeneous, since we have $4g-3$ edges on a trivalent genus $g$ graph with $g$ marked points. Meanwhile, such a graph having $2g-2$, Mikhakin's multiplicity is only $(4g-4)$-homogeneous (a $k^2$ for each vertex), so we see that both cannot agree and we are missing a term: the gcd $\delta_\Gamma$.
    \end{rem}

    \subsection{Correspondence theorem}

    Choose a Mumford family $\AAA(Z,S)$ with tropicalization $\TT A$, which we assume to be polarized and generic in the space of tropical abelian surfaces with a given polarization. Choose a generic point configuration $\P$ in $\TT A$. We have a finite number of genus $g$ tropical curves of degree $B$ passing through $\P$.

    Assume $B=\left(\begin{smallmatrix} b_{11} & b_{21} \\ b_{12} & b_{22} \end{smallmatrix}\right)$, and for $\delta|B$, set
    $$\sigma(Z,B,\delta)= \zeta_{11}^{-b_{12}/\delta}\zeta_{12}^{b_{11}/\delta}\zeta_{21}^{-b_{22}/\delta}\zeta_{22}^{b_{21}/\delta}.$$
    We can now state the realization and correspondence theorem, coupled to the multiplicity computation from \cite{blomme2022abelian1}.

    \begin{theo}\cite[Theorem 1.1 \& 1.2]{nishinou2020realization}\label{theo-corresp}
        Let $\AAA(Z,S)$ be a Mumford family with generic tropicalization $\TT A$. Let $\P_t$ be a configuration of $g$ points with generic tropicalization $\P$. Let $h:\Gamma\to\TT A$ be a parametrized tropical curve passing through $\P$ with gcd $\delta_\Gamma$.
        \begin{enumerate}
            \item The parametrized tropical curve is realizable in $\AAA(Z,S)$ if and only if $\sigma(Z,B,\delta_\Gamma)=1$.
            \item If the curve is realizable, there are precisely $\delta_\Gamma\prod m_V$ complex curves passing through $\P_t$ and tropicalizing to $h:\Gamma\to\TT A$.
        \end{enumerate}
    \end{theo}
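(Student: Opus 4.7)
The plan is to follow the Mumford/Nishinou degeneration strategy: extend $\AAA(Z,S)$ to a semistable family over the full disk $D$ whose central fiber is a chain of toric surfaces, apply the toric tropical correspondence theorem of Mikhalkin and Nishinou--Siebert on the pieces, and then track how the local solutions assemble into global complex curves on the generic fiber $\AAA(Z,S)_t$.

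First I would construct the semistable model. The combinatorics is governed by the lattice $\Lambda\subset\RR^2$ together with a sufficiently fine polyhedral subdivision of $\RR^2/\Lambda$ whose $2$-cells correspond to the toric components appearing on the central fiber; the chosen genericity of $\TT A$ and of the point configuration $\P$ guarantees that each trivalent vertex of $\Gamma$ falls inside a single toric component, each edge corresponds to a transverse toric boundary intersection, and each marked point sits in the interior of its component. On the central fiber one then applies the toric correspondence theorem vertex by vertex: around each trivalent vertex $V$ not adjacent to a marked point, there are exactly $m_V$ complex toric curves with the prescribed outgoing slopes passing through the imposed markings, and the balancing condition makes these local pieces glue across the toric divisors, producing $\prod m_V$ candidate complex curves on the universal cover $(\CC^*)^2$ of the generic fiber.

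The global descent from $(\CC^*)^2$ to $\AAA(Z,S)_t = (\CC^*)^2/\Lambda_t$ is the heart of the matter and produces both parts of the statement. The lift of $h:\Gamma\to\TT A$ to the universal cover yields, for each independent cycle of $\Gamma$, a monodromy equation on the monomials $\zeta_{ij}t^{s_{ij}}$; rewriting these equations through the explicit formula $B^\intercal(\varphi)=\sum_e\varphi(n_e)l_e n_e$ and pulling out the common factor $\delta_\Gamma$ from all edge weights, one recognises exactly the condition $\sigma(Z,B,\delta_\Gamma)=1$, where the exponents $b_{ij}/\delta_\Gamma$ reflect that the primitive cycle class of $\Gamma$ is divided by $\delta_\Gamma$. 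This gives part (1). Once the condition holds, the monomial equation governing the phase of the lift becomes an equation of degree $\delta_\Gamma$ in $\CC^*$, whose $\delta_\Gamma$ roots correspond to $\delta_\Gamma$ distinct translates of each local solution that all descend to the quotient; combined with the $\prod m_V$ local choices this gives the multiplicity $\delta_\Gamma\prod m_V$ of part (2).

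The hardest step I expect is the smoothing/deformation argument: certifying that each of the $\delta_\Gamma\prod m_V$ central-fiber candidates actually extends over the whole family $\AAA(Z,S)\to D$, and that no extra or missing solutions appear in the process. This is what the log-geometric core of \cite{nishinou2020realization} controls, and it is precisely where the genericity assumptions on $\TT A$ and on $\P$ enter, both to ensure that all tropical curves involved are trivalent and immersed and to guarantee transversality of the relevant deformation-theoretic obstructions. Granting this technology, the two parts of the statement follow cleanly from the toric correspondence theorem applied locally combined with the $\Lambda_t$-descent analysis described above.
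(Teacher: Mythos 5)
This statement is not proved in the paper at all: it is imported verbatim from \cite{nishinou2020realization}, with the multiplicity rewritten in the form $\delta_\Gamma\prod m_V$ and the realizability condition simplified using computations from \cite{blomme2022abelian1}. So your proposal has to be judged as a reconstruction of that external proof, and as such it is an architecturally reasonable outline (degenerate over the disk to a union of toric surfaces dictated by a polyhedral subdivision of $\RR^2/\Lambda$, count on the central fiber via the toric correspondence of \cite{mikhalkin2005enumerative,nishinou2006toric}, log-deform, and extract the descent condition modulo $\Lambda_t$), but it is not a proof: the entire smoothing/unobstructedness step, which is the actual content of Nishinou's theorem, is explicitly deferred to ``the log-geometric core of \cite{nishinou2020realization}''. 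Citing the theorem's own proof as the justification of its hardest step leaves nothing established beyond the statement itself.

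Two of your concrete mechanisms are also off. First, the multiplicity accounting: at a trivalent vertex not adjacent to a marked point there is no incidence condition, and the corresponding rational curve in the toric component with three prescribed boundary tangencies is unique up to the $(\CC^*)^2$-translation, so there are not ``exactly $m_V$ local solutions'' at that vertex; the factor $\prod m_V$ only appears after taking the determinant of the global system of incidence and gluing (monodromy) equations. Indeed, the paper's own remark stresses that the multiplicity naturally produced by \cite{nishinou2020realization} is $\prod_e w_e$ times a lattice index, which is $(4g-3)$-homogeneous, whereas $\prod m_V$ alone is only $(4g-4)$-homogeneous; identifying this count with $\delta_\Gamma\prod m_V$ is exactly the nontrivial computation carried out in \cite{blomme2022abelian1}, which your ``degree $\delta_\Gamma$ phase equation, hence $\delta_\Gamma$ translates'' heuristic asserts rather than derives. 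Second, the realizability criterion in \cite{nishinou2020realization} reads $\sigma(Z,B,\delta_\Gamma)=(-1)^{\sum m_V/\delta_\Gamma}$, and it is again a result of \cite{blomme2022abelian1} that this sign is always $+1$; your monodromy computation lands directly on $\sigma(Z,B,\delta_\Gamma)=1$ and silently skips this sign issue. To make the argument complete you would need to either reproduce the deformation-theoretic and multiplicity analysis of the cited papers or state clearly that parts (1) and (2) are being quoted, as the paper itself does.
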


    The condition $\sigma(Z,B,\delta_\Gamma)=1$, referred later as \textit{realizability condition}, amounts to the Riemann-bilinear relation. Its necessity can be deduced from the Menelaus condition \cite{mikhalkin2017quantum}. In \cite{nishinou2020realization}, the right hand-side of the condition is rather $(-1)^{\sum m_V/\delta}$, but it is proved in \cite{blomme2022abelian1} that this quantity is actually always $1$. 

    \begin{rem}
        The union of images of the tropical curves induces a polyhedral subdivision of $\TT A$. The latter can be used to add a central fiber to $\AAA(Z,S)$ which the gluing of toric surfaces among their toric divisors (see \cite{nishinou2020realization} and \cite{blomme2022abelian1}). Nishinou's correspondence theorem states that the deformation of curves in the central fiber is essentially unobstructed.
    \end{rem}

    \section{Multiple cover formula}

    We now get to the proof of the multiple cover formula. The idea is to use two distinct Mumford families $\AAA(Z_0,S)$ and $\AAA(Z_1,S)$ carrying distinct polarizations $Q_0$ and $Q_1$, that yet induce the same tropical polarization $C$ on their common tropicalization. Applying the correspondence theorem in both families yields the curve counts for a primitive class, and for the non-primitive curve class. To conclude, we observe that the multiple cover formula is true for each tropical curve, which are the same for both Mumford families.

    Before getting to the proof, we describe the Mumford families we use.
    \begin{itemize}
        \item Let $B=\left(\begin{smallmatrix} b_{11} & b_{21} \\ b_{12} & b_{22} \end{smallmatrix}\right)$ be a matrix with divisibility $d$ and determinant $d^2n$. We could take an explicit $B$ but do not even need this level of specificity.
        \item Let $S$ be a (integer) matrix such that $B$ induces a tropical polarization on the corresponding abelian surface and assume that $S$ is generic among them. In particular, for a generic choice of point configuration $\P\in\TT A$, the genus $g$ degree $B$ curves passing through $\P$ are trivalent.
        \item Let $\tau\in\ZZ$ be some integer and let $T=\left(\begin{smallmatrix}
            0 & \tau \\
            -\tau & 0 \\
        \end{smallmatrix}\right)$.
    \end{itemize}

    Let $Z_\tau=\left(\begin{smallmatrix} z_{11}^\tau & z_{21}^\tau \\ z_{12}^\tau & z_{22}^\tau \end{smallmatrix}\right)$ be a complex matrix such that the complex torus $\CC^2/(I\ Z_\tau)$ admits the polarization $Q_\tau$ defined by
        $$\operatorname{Pf}(Q_\tau)Q_\tau^{-1}=\begin{pmatrix}
            -T & B \\
            -B^\intercal & 0
        \end{pmatrix}.$$
        Then, using Proposition \ref{prop-mumford-polarization}, the Mumford family $\AAA(Z_\tau,S)$ admits the matrix $Q_\tau$ as polarization. We set $\zeta_{ij}^\tau=e^{2i\pi z_{ij}^\tau}$ to be the multiplicative coordinates of the Mumford family. The realizability condition from Theorem \ref{theo-corresp} can be reformulated as follows.
        
        \begin{lem}
            The realizability condition in the Mumford family $\AAA(Z_\tau,S)$ for a trivalent immersed parametrized tropical $h:\Gamma\to\TT A$ is equivalent to $\delta_\Gamma$ dividing $\tau$.
        \end{lem}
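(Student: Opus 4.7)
The plan is to unwind the definition of $\sigma(Z_\tau,B,\delta_\Gamma)$ and identify its argument with $\tau/\delta_\Gamma$ via the first Riemann-bilinear relation established in Proposition \ref{prop-mumford-polarization}. By Theorem \ref{theo-corresp}, realizability is equivalent to $\sigma(Z_\tau,B,\delta_\Gamma)=1$, so the lemma will follow from a short explicit computation.

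First I would substitute $\zeta_{ij}^\tau = e^{2i\pi z_{ij}^\tau}$ into the formula for $\sigma$ to obtain
$$\sigma(Z_\tau,B,\delta_\Gamma) = \exp\!\left(\frac{2i\pi}{\delta_\Gamma}\bigl(-b_{12}z_{11}^\tau + b_{11}z_{12}^\tau - b_{22}z_{21}^\tau + b_{21}z_{22}^\tau\bigr)\right).$$
The expression inside the parentheses is precisely the upper-right entry of $BZ_\tau^\intercal - Z_\tau B^\intercal$, as already noted in the text following Proposition \ref{prop-mumford-polarization}.

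Next I would invoke condition (3) of Proposition \ref{prop-mumford-polarization}, which asserts that the polarization $Q_\tau$ exists on the Mumford family $\AAA(Z_\tau,S)$ precisely when $BZ_\tau^\intercal - Z_\tau B^\intercal = T$. Since $T = \left(\begin{smallmatrix} 0 & \tau \\ -\tau & 0 \end{smallmatrix}\right)$, the upper-right entry is exactly $\tau$, and thus
$$\sigma(Z_\tau,B,\delta_\Gamma) = e^{2i\pi \tau/\delta_\Gamma}.$$

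Finally, this quantity equals $1$ if and only if $\tau/\delta_\Gamma \in \ZZ$, i.e. $\delta_\Gamma \mid \tau$, which combined with Theorem \ref{theo-corresp}(1) gives the claimed equivalence. There is no genuine obstacle here: the only subtle point is ensuring that the integer $\tau$ appearing in the definition of the polarization $Q_\tau$ really is the same $\tau$ produced by the Riemann relation, which is precisely the content of Proposition \ref{prop-mumford-polarization}.
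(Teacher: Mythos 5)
Your proposal is correct and follows essentially the same route as the paper: both take the upper-right entry of the first Riemann-bilinear relation $BZ_\tau^\intercal - Z_\tau B^\intercal = T$ to identify the exponent of $\sigma(Z_\tau,B,\delta_\Gamma)$ with $\tau/\delta_\Gamma$, and then apply Theorem \ref{theo-corresp}. No gap to report.
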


        \begin{proof}        
        The first part of the Riemann bilinear relation coming from the polarization of the family $\AAA(Z_\tau,S)$ is $BZ_\tau^\intercal-Z_\tau B^\intercal = T$, which are size $2$ skew-symmetric matrices. Taking the upper-right coefficient, it can be rewritten
        $$b_{11}z_{12}^\tau+b_{21}z_{22}^\tau-b_{12}z_{11}^\tau-b_{22}z_{21}^\tau=\tau.$$
        Dividing by $\delta$ for some $\delta|d$ and taking the exponential, we get that
        $$(\zeta_{12}^\tau)^{b_{11}/\delta}(\zeta_{11}^\tau)^{-b_{12}/\delta}(\zeta_{22}^\tau)^{b_{21}/\delta}(\zeta_{21}^\tau)^{-b_{22}/\delta}=e^{2i\pi\frac{\tau}{\delta}},$$
        so that realizability condition $\sigma(Z_\tau,B,\delta)=1$ from Theorem \ref{theo-corresp} is equivalent to $e^{2i\pi\tau/d}=1$, or in other terms $\delta_\Gamma|\tau$.
        \end{proof}

    We can now prove the multiple cover formula.

    \begin{theo}\label{theo-multiple-cover}
        The invariants $N_{g,d,n}$ satisfy the multiple cover formula
        $$N_{g,d,n} = \sum_{k|d}k^{4g-3}N_{g,1,\left(\frac{d}{k}\right)^2n}.$$
    \end{theo}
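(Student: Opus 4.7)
The strategy is to apply Theorem~\ref{theo-corresp} in two Mumford families sharing the same tropicalization $\TT A$, tropical degree $B$ (divisibility $d$, $\det B = d^2 n$), and point configuration $\P$, differing only through the twist $T = \left(\begin{smallmatrix} 0 & \tau \\ -\tau & 0 \end{smallmatrix}\right)$. By the lemma just above, the realizability condition in $\AAA(Z_\tau,S)$ is $\delta_\Gamma \mid \tau$. For $\tau = 0$ this is vacuous and the complex class $\left(\begin{smallmatrix} 0 & B \\ -B^\intercal & 0 \end{smallmatrix}\right)$ has divisibility $d$ and self-intersection $2d^2n$, giving the invariant $N_{g,d,n}$. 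For $\tau = 1$ only gcd one tropical curves contribute, while the complex class has divisibility $\gcd(1,d) = 1$ with the same self-intersection $2d^2n$ (note $\operatorname{Pf}(Q_\tau) = -\det C$ does not depend on $\tau$), yielding the primitive invariant $N_{g,1,d^2n}$.

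Applying the correspondence in $\AAA(Z_0,S)$ and grouping tropical curves by their gcd,
\[
N_{g,d,n} \;=\; \sum_{\Gamma} \delta_\Gamma \prod_V m_V \;=\; \sum_{k\mid d} N^{\mathrm{trop}}_{g,B,k} \;=\; \sum_{k\mid d} k^{4g-3}\, N^{\mathrm{trop}}_{g,B/k,1},
\]
the last equality being the $(4g-3)$-homogeneity of $m_\Gamma = \delta_\Gamma \prod m_V$ recalled in the excerpt. Next, run the $\tau = 1$ argument for an arbitrary tropical degree $B'$ of divisibility $d'$ and $\det B' = (d')^2 n'$: the correspondence in a Mumford family of tropical degree $B'$ with twist $1$ gives $N^{\mathrm{trop}}_{g,B',1} = N_{g,1,(d')^2 n'}$. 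Specializing $B' = B/k$ produces $N^{\mathrm{trop}}_{g,B/k,1} = N_{g,1,(d/k)^2 n}$, and substituting into the previous display finishes the proof.

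The only subtlety worth watching is the bookkeeping linking gcd $k$ curves of degree $B$ to gcd one curves of degree $B/k$: one must confirm that $\delta_\Gamma \prod m_V$ rescales by the exact factor $k \cdot k^{2(2g-2)} = k^{4g-3}$ coming from the single $k$ in $\delta_\Gamma$ and the $k^2$ at each of the $2g-2$ non-marked trivalent vertices, and that the tropical invariant depends on $B$ only through $(d,n)$, so that $N^{\mathrm{trop}}_{g,B/k,1}$ may legitimately be identified with a primitive complex invariant coming from a \emph{different} Mumford family (one of tropical degree $B/k$). Both facts are already in the excerpt, so once the two correspondences are set up there is no further obstacle.
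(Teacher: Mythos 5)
Your proposal is correct and follows essentially the same route as the paper: the two Mumford families $\AAA(Z_0,S)$ and $\AAA(Z_1,S)$ with the same tropicalization, the realizability criterion $\delta_\Gamma\mid\tau$, the $(4g-3)$-homogeneity of $m_\Gamma=\delta_\Gamma\prod m_V$ under the bijection between gcd-$k$ curves of degree $B$ and gcd-$1$ curves of degree $B/k$, and the re-use of the $\tau=1$ correspondence for the classes $B/k$ to identify $N^{\mathrm{trop}}_{g,B/k,1}$ with $N_{g,1,(d/k)^2n}$. The bookkeeping points you flag (primitivity of $Q_1$, $\tau$-independence of the self-intersection, dependence of the invariants only on $(d,n)$) are exactly the ones the paper relies on, so nothing further is needed.
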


    \begin{proof}
        We consider the values $\tau=0$ and $\tau=1$ and use the correspondence theorem for the Mumford families $\AAA(Z_0,S)$ and $\AAA(Z_1,S)$. Choose a generic point configuration $\P$ in $S$, with lifts $\P_0$ (resp. $\P_1$) in $\AAA(Z_0,S)$ (resp. $\AAA(Z_1,S)$).
        
        Let $\CCC(B)$ be the set of genus $g$ tropical curves of degree $B$ passing through $\P$. For $k|B$, let $\CCC_k(B)\subset\CCC(B)$ be the subset of curves with gcd $k$. Stress that we have a bijection between $\CCC_k(B)$ and $\CCC_1(B/k)$ obtained by just dilating edge weights (and lengths).
        
        As the tropicalizations of $\AAA(Z_0,S)$ and $\AAA(Z_1,S)$ are the same, the tropical solutions are the same.

        \begin{itemize}
            \item For the family $(Z_0,S)$, the polarization $Q_0$ has divisibility $d$ and self-intersection $2d^2n$. The phase condition $\delta_\Gamma|0$ is always satisfied. Therefore, we get
            $$N_{g,d,n} = \sum_{\Gamma\in\CCC(B)} m_\Gamma.$$

            \item For the family $(Z_1,S)$, the polarization $Q_1$ is primitive and also has self-intersection $2d^2n$. The phase condition $\delta_\Gamma|1$ is only satisfied for curves with gcd $1$, so-called \textit{primitive curves}. Therefore, we get that
            $$N_{g,1,d^2n} = \sum_{\Gamma\in\CCC_1(B)}m_\Gamma \  \left(= N^\mathrm{trop}_{g,B,1}\right).$$
        \end{itemize}

        To conclude, we only need to sort out tropical curves in the first point according to their gcd, and apply the second point for the classes $B/k$:
        \begin{align*}
            N_{g,d,n} = \sum_{\Gamma\in\CCC(B)} m_\Gamma =& \sum_{k|d} \sum_{\Gamma\in\CCC_k(B)} m_\Gamma \\
            =& \sum_{k|d} \sum_{\widetilde{\Gamma}\in\CCC_1(B/k)} m_{k\widetilde{\Gamma}} \\
            =& \sum_{k|d} k^{4g-3}\sum_{\widetilde{\Gamma}\in\CCC_1(B/k)} m_{\widetilde{\Gamma}} \\
            =& \sum_{k|d} N_{g,1,\left(\frac{d}{k}\right)^2n}.\\
        \end{align*}
    \end{proof}

\bibliographystyle{alpha}
\bibliography{biblio}

\end{document}